
\documentclass[final]{siamltex}

\usepackage{graphicx}
\usepackage{color}
\usepackage{bm}
\usepackage{amsfonts}
\usepackage{amsmath}

\newcommand{\tr}{^{\sf T}}
\newcommand{\m}[1]{{\bf{#1}}}
\newcommand{\g}[1]{\mbox{\boldmath $#1$}}
\newcommand{\C}[1]{{\cal {#1}}}

\input epsf


\title{
Extension of Switch Point Algorithm to Boundary-Value Problems
\thanks{July 1, 2023.
The author gratefully acknowledges support by the National
Science Foundation under grants 1819002 and 2031213, and by
Office of Naval Research under grant N00014-22-1-2397.}
}

\author{
    William W. Hager\thanks{{\tt hager@ufl.edu},
        http://people.clas.ufl.edu/hager/,
        PO Box 118105,
        Department of Mathematics,
        University of Florida, Gainesville, FL 32611-8105.}
}

\begin{document}

\maketitle

\begin{abstract}
In an earlier paper (https://doi.org/10.1137/21M1393315),
the Switch Point Algorithm was developed for solving optimal
control problems whose solutions are either singular or bang-bang
or both singular and bang-bang,
and which possess a finite number of jump discontinuities in an
optimal control at the points in time where the solution structure changes.
The class of control problems that were considered had a given initial
condition, but no terminal constraint.
The theory is now extended to include problems with both initial and terminal
constraints, a structure that often arises in boundary-value problems.
Substantial changes to the theory are needed to handle this more general
setting.
Nonetheless,
the derivative of the cost with respect to a switch point is again
the jump in the Hamiltonian at the switch point.
\end{abstract}

\begin{keywords}
Switch Point Algorithm, Singular Control, Bang-Bang Control,
Boundary-value Problems
\end{keywords}

\begin{AMS}
49M25, 49M37, 65K05, 90C30
\end{AMS}

\pagestyle{myheadings}
\thispagestyle{plain}
\markboth{W. W. HAGER}{SWITCH POINT ALGORITHM FOR BVP}

\section{Introduction}
%
%
An earlier paper \cite{AghaeeHager21} develops the Switch Point Algorithm for
initial-value problems with bang-bang or singular solutions.
This paper extends the algorithm to problems with terminal constraints.
More precisely, we consider fixed terminal time control problem of the form
\begin{equation}\label{CP}
\begin{array}{ll}
\min C(\m{x}(T)) \quad \mbox{subject to} & \m{\dot{x}}(t) =
\m{f}(\m{x}(t), \m{u}(t)), \quad \m{u}(t) \in \C{U}(t), \\[.05in]
~& \m{x}_I (0) = \m{b}_I, \quad \m{x}_E (T) = \m{b}_E,
\end{array}
\end{equation}
where $\m{x} : [0, T] \rightarrow \mathbb{R}^n$ is absolutely continuous,
$\m{u} : [0, T] \rightarrow \mathbb{R}^m$ is essentially bounded,
$C : \mathbb{R}^n \rightarrow \mathbb{R}$,
$\m{f} : \mathbb{R}^n \times \mathbb{R}^m \rightarrow$ $\mathbb{R}^n$,
$\C{U}(t)$ is a closed and bounded set for each $t \in [0, T]$, $I$ and $E$
are subsets of $\{1, 2, \ldots, n\}$, and $\m{x}_I$ denotes the
subvector of $\m{x}$ associated with indices $i \in I$.
The vectors $\m{b}_I$ and $\m{b}_E$ are given initial and terminal values
for the state.
It is assumed that $|I| + |E| = n$, where $|S|$
denotes the number of elements in a set $S$, and
the dynamics $\m{f}$ and the objective $C$ are continuously differentiable.
Here and throughout the paper, differential equations should hold almost
everywhere on $[0, T]$.
Problems of this form arise in boundary-value problems such as the fish
harvesting problem in \cite{Neubert03}, which is also studied in the PhD thesis
\cite{Atkins21} of Summer Atkins.

With the notation given above, the paper \cite{AghaeeHager21} considered
an initial value problem where $|I| = n$ and $|E| = 0$.
In this special case, any $\m{u}$ satisfying the control constraint is
feasible, and the associated state is the solution to an initial value problem.
When $|E| > 0$, components of the
initial state corresponding to $i \in I^c$, the complement of $I$,
are unknown.
The nonspecified components of the initial state along with the control
$\m{u}$ must be chosen to satisfy the boundary condition
$\m{x}_E (T) = \m{b}_E$.
Due to the terminal constraint, the theory developed in \cite{AghaeeHager21}
is no longer applicable.

The costate associated with (\ref{CP}) satisfies the linear
differential equation
\begin{equation}\label{p}
\m{\dot{p}}(t) =
-\m{p}(t)\nabla_x \m{f}(\m{x}(t), \m{u}(t)),
\quad \m{p}_J (0) = \m{0},
\quad \m{p}_F(T) = \nabla_F C(\m{x}(T)) ,
\end{equation}
where $J$ and $F$ denote the complements of $I$ and $E$ respectively,
$\m{p} : [0, T] \rightarrow \mathbb{R}^n$ is a row vector,
the objective gradient $\nabla_F C$ is a row vector whose $i$-th component
is the partial derivative of $C$ with respect to $x_i$, $i \in F$, and
$\nabla_x \m{f}$ denotes the Jacobian of the dynamics with respect to $\m{x}$.
Due to the terminal constraint $\m{x}_E (T) = \m{b}_E$, the objective is only
a function of $\m{x}_F (T)$.
Under the assumptions of the Pontryagin minimum principle,
a local minimizer of (\ref{CP}) and the associated costate have the
property that
\begin{equation}\label{PMP}
H (\m{x}(t), \m{u}(t), \m{p}(t)) =
\inf \{ H (\m{x}(t), \m{v}, \m{p}(t)) : \m{v} \in \C{U}(t) \}
\end{equation}
for almost every $t \in [0,T]$, where
$H(\m{x}, \m{u}, \m{p}) = \m{p}\m{f}(\m{x}, \m{u})$ is the Hamiltonian.

When the Hamiltonian is linear in the control and the
feasible control set has the form
\[
\C{U}(t) = \{ \m{v}\in\mathbb{R}^m :
\g{\alpha}(t) \le \m{v} \le \g{\beta}(t) \},
\]
where $\g{\alpha}$ and $\g{\beta} : [0,T] \rightarrow \mathbb{R}^m$,
it is often possible to decompose $[0, T]$ into a finite number of
disjoint subintervals $(s_i, s_{i+1})$, where
$0 = s_0 < s_1 < \ldots < s_N = T$, and on each subinterval,
each component of an optimal control is either singular or bang-bang.
Moreover, by singular control theory \cite{Schattler12}, it is often possible to
express the control in feedback form as
$\m{u}(t) =$ $\g{\phi}_i(\m{x}(t),t)$ for all $t \in (s_i, s_{i+1})$ for some
function $\g{\phi}_i$ defined on a larger interval containing $(s_i, s_{i+1})$.
In the Switch Point Algorithm,
the original control problem is solved by optimizing
over the choice of the $s_i$, $0 < i < N$.
In other words, if $\m{F}_i(\m{x}, t) := \m{f}(\m{x}, \g{\phi}_i(\m{x},t))$ and
$\m{F}(\m{x},t) := \m{F}_i(\m{x},t)$ for all $t \in (s_i, s_{i+1})$,
$0 \le i < N$, then (\ref{CP}) is replaced by the problem
\begin{equation}\label{SP}
\min_{\m{s}} C(\m{x}(T)) \quad \mbox{subject to} \quad \m{\dot{x}}(t) =
\m{F}(\m{x}(t),t ), \quad \m{x}_I (0) = \m{b}_I, \quad \m{x}_E (T) = \m{b}_E.
\end{equation}

In order to solve (\ref{SP}) efficiently,
we develop an algorithm for computing the derivative of the objective
with respect to a switch point.
This formula allows us to utilize gradient, conjugate gradient, and
quasi-Newton methods in the solution process.
Let $C(\m{s})$ denote the objective in (\ref{SP}) parameterized by the
switch points $s_i$, $0 < i < N$.
Under a smoothness assumption for each $\m{F}_i$ and invertibility
assumptions for submatrices of related fundamental matrices,
we obtain the following formula:
\begin{equation} \label{DC}
\frac{\partial C}{\partial s_i} (\m{s}) =
H_{i-1} (\m{x}(s_i), \m{p}(s_i), s_i)
- H_{i}(\m{x}(s_i), \m{p}(s_i), s_i), \quad 0 < i < N,
\end{equation}
where $H_i (\m{x}, \m{p}, t) = \m{p}\m{F}_i(\m{x}, t)$,
and the row vector $\m{p}: [0, T] \rightarrow \mathbb{R}^n$
is the solution to the linear differential equation
\begin{equation}\label{pdj}
\m{\dot{p}}(t) = - \m{p}(t) \nabla_x \m{F}(\m{x}(t), t),
\quad t \in [0, T],
\quad
\m{p}_F(T) = \nabla_F C(\m{x}(T)), \quad \m{p}_J(0) = \m{0}.
\end{equation}
This matches the formula given in \cite[Thm. 2.4]{AghaeeHager21} in the
case $|E| = 0$.
Summer Atkins in her thesis \cite{Atkins21} also obtains this formula
in the special case of the fish harvesting problem.
Since $\m{F}$ could jump at $s_i$,
the existence of the Jacobian in (\ref{pdj}) is generally restricted to
the open intervals $(s_i, s_{i+1})$, and the differential equation only
needs to hold almost everywhere.

See the earlier paper \cite{AghaeeHager21} for a detailed survey of literature
concerning bang-bang and singular control problems, which includes the
papers \cite{
Aly1978,
Anderson1972,
Biegler20,
Biegler19b,
Betts2010,
Bryson75,
Biegler16,
Biegler19a,
bang_bang, Jacobson70, Maurer1976, Maurer05,
VossenThesis, Vossen2010}.
In more recent work \cite{PagerRao2022}, the authors develop a method for
solving bang-bang and singular optimal control problem using adaptive
Legendre–Gauss–Radau collocation
\cite{DarbyHagerRao11, DarbyHagerRao10, HagerHouRao19, LiuHagerRao15,
LiuHagerRao18, PattersonHagerRao15}
in which the structure of the solution is
first determined, and a regularization technique is used
in the singular regions, while
the switch points are treated as free parameters in the optimization.
The gradient methods that might be used in conjunction with the derivatives
provided in the current paper do not require regularization, however,
as discussed in Section~\ref{numerical}, a good
starting guess for the switch points is needed to ensure convergence.

The paper is organized as follows.
Section~\ref{existence} provides an existence result for a system of
nonlinear equations.
This key result is the basis for a stability analysis of the boundary-value
problem associated with (\ref{CP}).
In Section~\ref{SBC}, stability with respect the terminal boundary constraint
is analyzed, while Section~\ref{SSP} analyzes stability with respect
to a switch point.
In Section~\ref{DSP}, the results of the previous sections are combined to
obtain the derivative formula~(\ref{DC}).
Section~\ref{case2} discusses problems where a singular control depends
on both state and costate.
Finally, Section~\ref{numerical} explores numerical issues.

{\bf Notation and Terminology.}
Throughout the paper, $\| \cdot \|$ is any norm on $\mathbb{R}^n$.
The ball with center $\m{c} \in \mathbb{R}^n$ and radius $\rho$ is denoted
$\C{B}_\rho (\m{c}) = \{ \m{x} \in \mathbb{R}^n : \|\m{x} - \m{c}\| \le \rho\}$.
The expression $\C{O}(\g{\theta})$ denotes a quantity whose norm is bounded
by $c \|\g{\theta}\|$, with $c$ is a constant
that is independent of $\g{\theta}$.
The Jacobian of $\m{f}(\m{x}, \m{u})$ with respect to $\m{x}$ is denoted
$\nabla_x \m{f}(\m{x}, \m{u})$; its $(i, j)$ element is
$\partial f_i (\m{x}, \m{u})/ \partial x_j$.
For a real-valued function such as $C$, the gradient
$\nabla C(\m{x})$ is a row vector, while
$\nabla_F C(\m{x})$ is a row vector whose $i$-th component, $i \in F$,
is the partial derivative of $C$ with respect to $x_i$.
For a vector $\m{x} \in \mathbb{R}^n$ and a set
$I \subset \{1, 2, \ldots, n\}$, $\m{x}_I$ is the subvector consisting of
elements  $x_i$, $i \in I$.
If $\m{A} \in \mathbb{R}^{m \times n}$ is a matrix, and $R$ and $C$ are
subsets of the row and column numbers respectively, then
$\m{A}_{RC}$ is the submatrix corresponding to rows in $R$ and columns in $C$.
All vectors in the paper are column vectors except for the costate $\m{p}$
which is a row vector.

\section{An Existence Result}
\label{existence}
In order to derive the formula (\ref{DC}) for the derivative of the objective
with respect to a switch point, we first need to analyze the stability of the
boundary-value problem in (\ref{CP}).
This analysis is done using the proposition stated below.
The proposition is a very special case of a general theorem
given in \cite[Thm. 2.1]{Hager02b}.
The general result, formulated in a Banach space with set-valued maps,
has broad application in the convergence analysis of numerical algorithms,
as seen in papers such as
\cite{HagerDontchevPooreYang95,DontchevHagerVeliov00,
Hager90, Hager2000RungeKuttaMI}.
The special case stated here is for finite dimensional point-to-point maps
which is sufficient for handling the analysis of (\ref{CP}).
This result is closely related to Newton's method, a favorite topic of
Asen L. Dontchev, whom we remember in this volume.

\begin{proposition}\label{asen}
Suppose that $\m{g} : \mathbb{R}^n \rightarrow \mathbb{R}^n$ is continuously
differentiable in $\C{B}_r(\m{0})$ for some $r > 0$,
and define $\delta =$ $\|\m{g}(\m{0})\|$.
Let $\C{L} \in \mathbb{R}^{n \times n}$ be an invertible matrix with
$\gamma := \|\C{L}^{-1}\|$ and with the
property that for some $\epsilon > 0$,
\begin{equation}\label{er}
\|\nabla g (\g{\theta}) - \C{L}\| \le \epsilon \; \mbox{ for all }
\g{\theta} \in \C{B}_r (\m{0}).
\end{equation}
If $\epsilon \gamma < 1$ and $\delta \le r(1-\gamma \epsilon)/\gamma$,
then there exists a unique $\g{\theta} \in \C{B}_r(\m{0})$ such that
$\m{g}(\g{\theta}) = \m{0}$.
Moreover, we have the bound
\begin{equation}\label{errorbound}
\|\g{\theta}\| \le \frac{\delta \gamma}{1 - \epsilon \gamma} .
\end{equation}
\end{proposition}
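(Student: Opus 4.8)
The plan is to recast the equation $\m{g}(\g{\theta}) = \m{0}$ as a fixed-point problem and invoke the contraction mapping principle. Since $\C{L}$ is invertible, $\m{g}(\g{\theta}) = \m{0}$ is equivalent to $\g{\theta} = \Phi(\g{\theta})$ where
\[
\Phi(\g{\theta}) := \g{\theta} - \C{L}^{-1}\m{g}(\g{\theta}).
\]
I would show that $\Phi$ maps $\C{B}_r(\m{0})$ into itself and is a contraction there, and then read off both the existence/uniqueness claim and the bound (\ref{errorbound}) directly from the fixed point.

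For the contraction estimate, observe that $\nabla \Phi(\g{\theta}) = \C{L}^{-1}\big(\C{L} - \nabla g(\g{\theta})\big)$, so submultiplicativity of the induced matrix norm together with (\ref{er}) gives $\|\nabla \Phi(\g{\theta})\| \le \gamma\epsilon$ for all $\g{\theta} \in \C{B}_r(\m{0})$. Because $\C{B}_r(\m{0})$ is convex and $\nabla g$ is continuous on it, integrating $\nabla\Phi$ along the segment joining any $\g{\theta}_1, \g{\theta}_2 \in \C{B}_r(\m{0})$ yields $\|\Phi(\g{\theta}_1) - \Phi(\g{\theta}_2)\| \le \gamma\epsilon\,\|\g{\theta}_1 - \g{\theta}_2\|$, and the hypothesis $\epsilon\gamma < 1$ makes this a genuine contraction. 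For the self-map property, take $\g{\theta} \in \C{B}_r(\m{0})$ and estimate
\[
\|\Phi(\g{\theta})\| \le \|\Phi(\g{\theta}) - \Phi(\m{0})\| + \|\Phi(\m{0})\| \le \gamma\epsilon\,\|\g{\theta}\| + \|\C{L}^{-1}\m{g}(\m{0})\| \le \gamma\epsilon\, r + \gamma\delta,
\]
which is at most $r$ precisely when $\delta \le r(1-\gamma\epsilon)/\gamma$, the second hypothesis.

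With these two facts the contraction mapping principle produces a unique $\g{\theta} \in \C{B}_r(\m{0})$ with $\Phi(\g{\theta}) = \g{\theta}$, equivalently $\m{g}(\g{\theta}) = \m{0}$. For the error bound, apply the contraction inequality with $\g{\theta}_2 = \m{0}$ to this fixed point: $\|\g{\theta}\| = \|\Phi(\g{\theta})\| \le \gamma\epsilon\,\|\g{\theta}\| + \|\Phi(\m{0})\| \le \gamma\epsilon\,\|\g{\theta}\| + \gamma\delta$, and rearranging gives (\ref{errorbound}).

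There is no serious obstacle here; the result is essentially a sharp bookkeeping of a Newton--Kantorovich-type contraction argument. The only points requiring care are that the matrix norm induced by the chosen vector norm is submultiplicative, so that $\|\C{L}^{-1}(\C{L} - \nabla g(\g{\theta}))\| \le \gamma\epsilon$, and that the radius condition $\delta \le r(1-\gamma\epsilon)/\gamma$ is used exactly to close the self-map inequality $\gamma\epsilon\, r + \gamma\delta \le r$; thus the stated hypotheses are tight for the argument as structured.
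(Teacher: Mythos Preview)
Your contraction-mapping argument is correct and complete: the Lipschitz estimate on $\Phi$, the self-map verification, and the rearrangement yielding (\ref{errorbound}) all go through exactly as you describe, and the induced operator norm is indeed submultiplicative for any underlying vector norm.

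As for comparison with the paper: the paper does not actually prove Proposition~\ref{asen}. It is stated without proof and attributed as a very special case of a general Banach-space result, \cite[Thm.~2.1]{Hager02b}, formulated there for set-valued maps. So there is no in-paper argument to compare against. Your self-contained Newton--Kantorovich style proof is the standard way to establish this finite-dimensional special case directly, and it supplies exactly what the paper leaves to the reference.
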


\section{Stability with Respect to Terminal Constraint}
\label{SBC}
In analyzing the differentiability of the objective in (\ref{SP}) with
respect to a switch point, there no loss in generality in focusing on the
case $N = 2$, where there is a single switch point $s \in (0, T)$ and
the dynamics switches from $\m{F}_0$ to $\m{F}_1$ at $t = s$:
\[
\m{F}(\m{x}, t) = \m{F}_0 (\m{x}, t) \; \mbox{for all} \; t \in [0, s)
\quad \mbox{and} \quad 
\m{F}(\m{x}, t) = \m{F}_1 (\m{x}, t) \; \mbox{for all} \; t \in (s, T] .
\]
It is assumed that there exists a feasible,
absolutely continuous state $\m{x}$ which satisfies the constraints
of (\ref{SP}).
That is, $\m{x}$ satisfies
\begin{equation}\label{s}
\m{\dot{x}}(t) =
\m{F}(\m{x}(t),t ), \quad \m{x}_I (0) = \m{b}_I, \quad \m{x}_E (T) = \m{b}_E.
\end{equation}
Throughout the paper, $\m{x}$ denotes a solution to this problem.
In this section, we focus on the following question:
If the endpoint constraint $\m{b}_E$ in (\ref{s}) is changed to
$\m{b}_E + \g{\pi}$, does there exist a solution $\m{x}^{\pi}$
to the perturbed problem
\begin{equation}\label{P}
\m{\dot{x}}(t) =
\m{F}(\m{x}(t),t ), \quad \m{x}_I (0) = \m{b}_I,
\quad \m{x}_E (T) = \m{b}_E + \g{\pi},
\end{equation}
and is the solution change bounded in terms of $\|\g{\pi}\|$?
The following assumption is used in this analysis.

{\it Dynamics Smoothness.}
For $\rho > 0$, define the tubes
\begin{eqnarray*}
\C{T}_0 &=& \{ (\g{\chi}, t) : t \in [0, s+\rho]
\mbox{ and } \g{\chi} \in \C{B}_\rho(\m{x}(t)) \}, \\
\C{T}_1 &=& \{ (\g{\chi}, t) : t \in [s-\rho, T]
\mbox{ and } \g{\chi} \in \C{B}_\rho(\m{x}(t)) \}.
\end{eqnarray*}
It is assumed that on $\C{T}_j$, $j = 0$ or 1,
$\m{F}_j$ is continuously differentiable,
while $\m{F}_j(\g{\chi}, t)$ is Lipschitz continuously
differentiable in $\g{\chi}$, uniformly in $t$, with Lipschitz constant $L$.

Let us define $\g{\theta}^* = \m{x}_J(0)$, and let us consider the initial-value
problem
\begin{equation}\label{IVP}
\m{\dot{y}}(t) =
\m{F}(\m{y}(t),t ), \quad \m{y}_I (0) = \m{b}_I,
\quad \m{y}_J (0) = \g{\theta}^* + \g{\theta}.
\end{equation}
For $\g{\theta} = \m{0}$, $\m{y} = \m{x}$, the solution of (\ref{s}),
since $\m{y}_J(0) = \m{x}_J(0)$.
Under Dynamics Smoothness, it follows from
\cite[Cor. 2.3]{AghaeeHager21} that (\ref{IVP}) has a solution
$\m{y}_{\theta}$ when $\|\g{\theta} \|$ is sufficiently small, and we have
the bound
\begin{equation}\label{yx}
\|\m{y}_{\theta}(t) - \m{x}(t)\| =
\|\m{y}_{\theta}(t) - \m{y}_0(t)\| \le e^{Lt} \|\g{\theta} \|
\quad \mbox{for all } t \in [0, T].
\end{equation}
By the continuity of $\nabla_x \m{F}_j$ on $\C{T}_j$, for $j = 0$ or 1,
it follows that there is a constant $\beta$ such that
\begin{equation}\label{beta}
\|\nabla_x \m{F}(\g{\chi}, t)\| \le \beta \; \mbox{ for all } \;
t \in [0, T] \;\mbox{ and } \; \g{\chi} \in \C{B}_\rho (\m{x}(t)).
\end{equation}

A sharper estimate for the  difference $\m{y} - \m{x}$ is obtained from the
solution $\m{z}_{\theta}$ of the linearized problem
\begin{equation}\label{LDE}
\m{\dot{z}}(t) = \nabla_x F (\m{x}(t), t) \m{z}(t), \quad
\m{z}_I (0) = \m{0},
\quad \m{z}_J (0) = \g{\theta} .
\end{equation}
Since $\nabla_x \m{F}(\m{x}(t), t)$ is continuous on $[0, s)$ and on
$(s, T]$, the solution
to the linear differential equation (\ref{LDE}) has a bound
\begin{equation}\label{ztheta}
\m{z}_{\theta}(t) = O(\g{\theta}) \mbox{ for all } t \in [0, T].
\end{equation}
Define for all $t \in [0, T]$ and $\alpha \in [0, 1]$,
\begin{equation}\label{delta-def}
\g{\delta}(t) = \m{y}_{\theta}(t) - \m{x}(t) - \m{z}_{\theta}(t)
\quad \mbox{and} \quad
{\m{x}}(\alpha,t) = \m{x}(t) + \alpha(\m{y}_{\theta} (t) - \m{x}(t)).
\end{equation}
Differentiating $\g{\delta}$ and utilizing a Taylor expansion with
integral remainder term, we obtain for all $t \in [0, T]$, $t \ne s$,
$\g{\dot{\delta}}(t) =$
$\m{\dot{y}}_{\theta}(t) - \m{\dot{x}}(t) - \m{\dot{z}_{\theta}}(t) =$
\begin{eqnarray}
&& \m{F}(\m{y}_{\theta}(t), t) - \m{F} (\m{x}(t), t) -
\nabla_x \m{F} (\m{x}(t), t) \m{z}_{\theta}(t) \label{deltadot} = \\
&& \left( \int_0^1 \nabla_x \m{F} ({\m{x}}(\alpha,t), t) \; d\alpha
\right) (\m{y}_{\theta}(t) - \m{x}(t))
- \left( \int_0^1 \nabla_x \m{F} (\m{x}(t), t) \; d\alpha \right)
\m{z}_{\theta} (t) = \nonumber \\
&& \left( \int_0^1 [\nabla_x \m{F} ({\m{x}}(\alpha,t), t) -
\nabla_x \m{F} (\m{x}(t), t) ] \; d\alpha \right) \m{z}_{\theta}(t)
+ \left( \int_0^1 \nabla_x \m{F} ({\m{x}}(\alpha,t), t) \; d\alpha \right)
\g{\delta}(t) . \nonumber
\end{eqnarray}
Take $\g{\theta}$ in (\ref{yx})
small enough that $\m{y}_{\theta}(t)$ lies in the
tube around $\m{x}(t)$ where $\nabla_x \m{F}$ is Lipschitz continuous.
If $L$ is the Lipschitz constant for $\nabla_x \m{F}$, then we have
\begin{equation}\label{F-lip}
\|\nabla_x \m{F} ({\m{x}}(\alpha,t), t) - \nabla_x \m{F} (\m{x}(t), t)\|
\le \alpha L \|\m{y}_{\theta}(t) - \m{x}(t)\| = \C{O}(\g{\theta})
\end{equation}
by (\ref{yx}).
Take the norm of each side of (\ref{deltadot}).
On the right side of (\ref{deltadot}), the coefficient of $\m{z}_\theta$
is $\C{O}(\g{\theta})$ by (\ref{F-lip}), while $\m{z}_\theta$ is
$\C{O}(\g{\theta})$ by (\ref{ztheta}).
Since $\|\nabla_x \m{F} ({\m{x}}(\alpha,t), t)\| \le \beta$
for all $\alpha \in [0,1]$ and $t \in [0, T]$ by (\ref{beta}), the right side
of (\ref{deltadot}) has the bound
$\C{O}(\|\g{\theta}\|^2) + \beta \|\g{\delta}(t)\|$.
On the left side, exploit the fact from
\cite[Lem. 2.1]{AghaeeHager21}
that the derivative of a norm is bounded by the norm of the derivative
to obtain
\begin{equation}\label{dtheta}
\frac{d \|\g{\delta}(t)\|}{dt} \le
\|\g{\dot{\delta}}(t)\| \le
\C{O}(\|\g{\theta}\|^2) + \beta \|\g{\delta}(t)\|.
\end{equation}
By the initial conditions for $\m{y}_\theta$, $\m{x}$, and $\m{z}_\theta$
in (\ref{IVP}), (\ref{s}), and (\ref{LDE}) respectively,
$\g{\delta}(0) = \m{0}$.
This observation, together with (\ref{dtheta}) and
Gronwall's inequality yield
\begin{equation}\label{deltabound}
\|(\m{y}_\theta - \m{x}) - \m{z}_\theta\| =
\|\g{\delta}(t)\| = \C{O}(\|\g{\theta}\|^2).
\end{equation}
Thus $\m{z}_{\theta}$ provides an $\C{O}(\|\g{\theta}\|^2)$ approximation to
the difference $\m{y}_{\theta} - \m{x}$.

The linearized problem (\ref{LDE}) plays a fundamental role in the
stability analysis of (\ref{s}).
Finding a solution of the perturbed problem (\ref{P})
is equivalent to finding the  starting condition
$\g{\theta}$ in (\ref{IVP}) with the property that
$\m{y}_{\theta}(T) =$ $\m{b}_E + \g{\pi}$.
Since $\m{z}_{\theta}$ is a close approximation to $\m{y}_{\theta} - \m{x}$,
we could choose $\g{\theta}$ so that $\m{z}_{\theta}(T) = \g{\pi}$,
in which case
\[
\m{y}_{\theta}(T) = \m{x}(T) + \m{z}_{\theta}(T) + \C{O}(\|\g{\theta}\|^2) =
\m{b}_E + \g{\pi} + \C{O}(\|\g{\theta}\|^2).
\]
Therefore, for this choice of $\g{\theta}$,
the solution of (\ref{IVP}) satisfies
the perturbed boundary condition to within $\C{O}(\|\g{\theta}\|^2)$.

The fundamental matrix
$\g{\Phi}: [0, T] \rightarrow \mathbb{R}^{n \times n}$
associated with the linear system
$\m{\dot{z}}(t) = \nabla_x F (\m{x}(t), t) \m{z}(t)$
is the solution to the initial-value problem
\begin{equation}\label{Phi}
\g{\dot{\Phi}} (t) = \nabla_x F (\m{x}(t), t) \g{\Phi}(t), \quad
\g{\Phi}(0) = \m{I},
\end{equation}
where $\m{I}$ is the $n \times n$ identity matrix.
The solution $\m{z}$ of the linearized problem (\ref{LDE})
is equal to the fundamental matrix times the initial condition.
Due to the special choice of the initial condition in (\ref{LDE}),
the $\g{\theta}$ that yields
$\m{z}_E(T) = \g{\pi}$ is the solution to the linear system of equations
$\g{\Phi}_{EJ}(T)\g{\theta} = \g{\pi}$, where
$\g{\Phi}_{EJ}$ represents the submatrix of $\g{\Phi}$ associated with
columns $J$ and rows $E$.
If this square submatrix is invertible, then
$\g{\theta} =$ $\g{\Phi}_{EJ}(T)^{-1} \g{\pi}$.
With these insights, we have the following result:

\begin{lemma}\label{IC-stability}
Suppose that $\g{\Phi}_{EJ}(T)$ is invertible and let $\gamma =$
$\|\g{\Phi}_{EJ}^{-1}(T)\|$.
For $\g{\pi}$ in a neighborhood $\C{N}$ of the origin,
the perturbed boundary-value problem $(\ref{P})$
has a solution $\m{x}^{\pi}$ and
\begin{equation}\label{xpi-stable}
\|\m{x}_J^{\pi}(0) - \m{x}_J(0)\| =
\|\m{x}_J^{\pi}(0) - \g{\theta}^*\| \le c \|\g{\pi}\|
\mbox{ for all } \g{\pi} \in \C{N},
\end{equation}
where $c$ is a constant that approaches $\gamma$
as $\|\g{\pi}\|$ approaches $\m{0}$.
\end{lemma}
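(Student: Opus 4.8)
The plan is to recast the problem of solving the perturbed boundary-value problem~(\ref{P}) as finding a zero of a smooth map and then apply Proposition~\ref{asen}. Specifically, define $\m{g} : \mathbb{R}^{|J|} \rightarrow \mathbb{R}^{|E|}$ (note $|J| = |E|$ since $|I|+|E| = n$ and $J$ is the complement of $I$) by
\begin{equation*}
\m{g}(\g{\theta}) = \m{y}_\theta(T)|_E - (\m{b}_E + \g{\pi}),
\end{equation*}
where $\m{y}_\theta$ is the solution of the initial-value problem~(\ref{IVP}). By the discussion preceding the lemma, a zero of $\m{g}$ corresponds exactly to a choice of the unknown initial components $\m{x}_J^\pi(0) = \g{\theta}^* + \g{\theta}$ for which the perturbed terminal constraint is met, and the resulting $\m{y}_\theta$ is the sought solution $\m{x}^\pi$. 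First I would record that $\m{g}$ is well-defined and continuously differentiable on a small ball $\C{B}_r(\m{0})$: this follows from Dynamics Smoothness together with \cite[Cor. 2.3]{AghaeeHager21}, which gives existence of $\m{y}_\theta$ and smooth dependence on the initial data, with the derivative of $\m{y}_\theta(t)$ with respect to $\g{\theta}$ governed by the fundamental matrix $\g{\Phi}$ of~(\ref{Phi}); concretely $\nabla \m{g}(\g{\theta})$ at $\g{\theta} = \m{0}$ equals $\g{\Phi}_{EJ}(T)$.

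Next I would verify the hypotheses of Proposition~\ref{asen} with the choice $\C{L} = \g{\Phi}_{EJ}(T)$, so $\gamma = \|\g{\Phi}_{EJ}^{-1}(T)\|$ as in the statement. The quantity $\delta = \|\m{g}(\m{0})\|$ equals $\|\m{x}_E(T) - (\m{b}_E + \g{\pi})\| = \|\g{\pi}\|$, using that $\m{x}$ solves the unperturbed problem~(\ref{s}). For the Lipschitz-type condition~(\ref{er}), I would use the $\C{O}(\|\g{\theta}\|^2)$ estimate~(\ref{deltabound}): since $\m{y}_\theta(T) = \m{x}(T) + \m{z}_\theta(T) + \C{O}(\|\g{\theta}\|^2)$ and $\m{z}_\theta(T)|_E = \g{\Phi}_{EJ}(T)\g{\theta}$, differentiating in $\g{\theta}$ shows $\nabla\m{g}(\g{\theta}) - \g{\Phi}_{EJ}(T) = \C{O}(\|\g{\theta}\|)$, hence given any $\epsilon > 0$ we may shrink $r$ so that $\|\nabla\m{g}(\g{\theta}) - \C{L}\| \le \epsilon$ on $\C{B}_r(\m{0})$, and in particular choose $r, \epsilon$ with $\epsilon\gamma < 1$. (Alternatively one argues directly from continuity of $\nabla\m{g}$ at $\m{0}$, which is cleaner; the key point is only that $\C{L} = \nabla\m{g}(\m{0})$.) With $\epsilon\gamma < 1$ fixed, the condition $\delta \le r(1 - \gamma\epsilon)/\gamma$ becomes $\|\g{\pi}\| \le r(1-\gamma\epsilon)/\gamma$, which defines the neighborhood $\C{N}$ of the origin.

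Then Proposition~\ref{asen} delivers a unique $\g{\theta} \in \C{B}_r(\m{0})$ with $\m{g}(\g{\theta}) = \m{0}$, i.e.\ a solution $\m{x}^\pi := \m{y}_\theta$ of~(\ref{P}), and the error bound~(\ref{errorbound}) gives
\begin{equation*}
\|\m{x}_J^\pi(0) - \g{\theta}^*\| = \|\g{\theta}\| \le \frac{\delta\gamma}{1 - \epsilon\gamma} = \frac{\gamma}{1-\epsilon\gamma}\,\|\g{\pi}\|,
\end{equation*}
which is~(\ref{xpi-stable}) with $c = \gamma/(1-\epsilon\gamma)$; as $\|\g{\pi}\| \to 0$ we may send $r \to 0$ and therefore $\epsilon \to 0$, so $c \to \gamma$, giving the final claim about the constant. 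I expect the main technical obstacle to be the justification that $\m{g}$ is $C^1$ with $\nabla\m{g}(\m{0}) = \g{\Phi}_{EJ}(T)$ — that is, interchanging differentiation in $\g{\theta}$ with the flow of~(\ref{IVP}) across the switch point $s$, where $\m{F}$ is only piecewise smooth. This is handled by working separately on $[0,s]$ and $[s,T]$ and matching at $t = s$ (the state $\m{x}$ is continuous there), exactly as the fundamental matrix $\g{\Phi}$ in~(\ref{Phi}) is defined piecewise; the estimates~(\ref{yx})–(\ref{deltabound}) already carry out the hard part of this by producing the quadratic remainder, so the remaining work is bookkeeping. A secondary point to state carefully is that shrinking $\C{N}$ forces $\g{\theta}$ into $\C{B}_r(\m{0})$ with $r$ small enough that $\m{y}_\theta(t)$ stays inside the tubes $\C{T}_0, \C{T}_1$ where Dynamics Smoothness applies, which is guaranteed by~(\ref{yx}).
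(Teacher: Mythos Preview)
Your proposal is correct and follows essentially the same route as the paper: define $\m{g}(\g{\theta}) = \m{y}_{\theta E}(T) - \m{b}_E - \g{\pi}$, take $\C{L} = \nabla\m{g}(\m{0}) = \g{\Phi}_{EJ}(T)$, observe $\delta = \|\g{\pi}\|$, invoke continuity of $\nabla\m{g}$ at $\m{0}$ to satisfy~(\ref{er}) for small $r$, and apply Proposition~\ref{asen} to obtain the solution together with the bound $c = \gamma/(1-\epsilon\gamma) \to \gamma$. The paper cites \cite[Chap.~1.6]{TaylorPDE} for the $C^1$ dependence on initial data, whereas you lean on the estimates~(\ref{yx})--(\ref{deltabound}); your parenthetical noting that the clean argument is simply continuity of $\nabla\m{g}$ at $\m{0}$ is exactly what the paper does, and you are right that the attempt to ``differentiate'' the $\C{O}(\|\g{\theta}\|^2)$ remainder is not quite legitimate on its own.
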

\begin{proof}
We apply Proposition~\ref{asen} with
$\C{L} =$ $\nabla \m{g}(\m{0})$, where
$\m{g}(\g{\theta}) =$
$\m{y}_{\theta E}(T) - \m{b}_E - \g{\pi}$ and $\m{y}_\theta$ is the
solution of (\ref{IVP}).
Both $\m{b}_E$ and $\g{\pi}$ are independent of $\g{\theta}$ so their
derivatives are $\m{0}$.
From the analysis in \cite[Chap.~1.6]{TaylorPDE},
the derivative of $\m{y}_{\theta E} (T)$ with respect to $\g{\theta}$,
evaluated at $\g{\theta} = \m{0}$ is
$\C{L} = \g{\Phi}_{EJ}(T)$.
Moreover, it follows from \cite[Chap.~1.6]{TaylorPDE} that
$\nabla \m{g}(\g{\theta})$ is continuously differentiable
at $\g{\theta} = \m{0}$.
Choose $\epsilon$ small enough that $\epsilon \gamma < 1$ and then
choose $r$ small enough that (\ref{er}) holds;
by continuity of the derivative of $\m{g}$ at $\g{\theta} = \m{0}$,
(\ref{er}) holds for $r$ sufficiently small.
Since $\m{g}(\m{0}) = \g{\pi}$, we have $\delta = \|\g{\pi}\|$.
Choose $\|\g{\pi}\|$ small enough that
$\delta \le r(1-\gamma \epsilon)/\gamma$.
Since all the requirements for (\ref{errorbound}) have now been satisfied,
there exists a unique
$\g{\theta} \in \C{B}_r(\m{0})$ such that
$\m{g}(\g{\theta}) = \m{0}$, or equivalently, such that
$\m{y}_{\theta E}(T) = \m{b}_E + \g{\pi}$.
By (\ref{errorbound}), $\|\g{\theta}\| \le c \|\g{\pi}\|$,
where $c = \gamma/(1-\epsilon \gamma)$ is independent of $\g{\pi}$.
Since $\m{y}_\theta$ satisfies both the initial and terminal conditions
for $\m{x}^\pi$ in (\ref{P}), we can take $\m{x}^\pi = \m{y}_{\theta}$.
At $t = 0$,
\[
\m{x}_J^\pi (0) = \m{y}_{\theta J} (0) = \g{\theta}^* + \g{\theta},
\]
which rearranges to give (\ref{xpi-stable}).
As $\epsilon$ tends to zero, we can let $r$ also approach zero, in which case
the denominator in (\ref{errorbound}) tends to one and the ball containing
the solution $\g{\theta}$ to $\m{g}(\g{\theta}) = \m{0}$ tends to zero.
\end{proof}
\section{Stability with respect to the Switch Point}
\label{SSP}
In order to obtain the derivative of the objective in (\ref{SP}) with respect
to the switch point, we need to analyze the effect of perturbations in
the switch point $s$.
Let $\m{F}^{+}$ be defined by
\[
\m{F}^{+} (\m{x}, t) = \left\{ \begin{array}{l}
\m{F}_0 (\m{x}, t) \mbox{ for all } t \in [0, s+\Delta s), \\
\m{F}_1 (\m{x}, t) \mbox{ for all } t \in (s+\Delta s, T],
\end{array}
\right.
\]
where $|\Delta s| \le \rho$.
Hence, $\m{F}^{+}$ is the dynamics gotten by changing the
switch point from $s$ to $s + \Delta s$.
The boundary-value problem associated with the perturbed switch point is
\begin{equation}\label{s+}
\m{\dot{x}}(t) =
\m{F}^+(\m{x}(t),t ), \quad \m{x}_I (0) = \m{b}_I, \quad \m{x}_E (T) = \m{b}_E,
\end{equation}
and a solution, if it exists, is denoted $\m{x}^+$.
The goal in this section is to show that when the invertibility condition of
Lemma~\ref{IC-stability} holds, the perturbed problem (\ref{s+}) has
a solution that is stable with respect to the perturbation $\Delta s$.

Let $\m{y}_\theta^{+}$ denote the solution to the perturbed
initial-value problem
\begin{equation}\label{IVP+}
\m{\dot{y}}(t) =
\m{F}^+(\m{y}(t),t ), \quad \m{y}_I (0) = \m{b}_I,
\quad \m{y}_J (0) = \g{\theta}^* + \g{\theta},
\end{equation}
where $\g{\theta}^* = \m{x}_J (0)$.
When $\g{\theta} = \m{0}$, we omit the $\g{\theta}$ subscript on
$\m{y}_\theta^{+}$ so $\m{y}^{+} := \m{y}_0^+$.
Since $\m{F}^{+} = \m{F}_0$ on $[0, s)$, assuming $\Delta s > 0$,
it follows that
\begin{equation}\label{I-pert}
\m{y}^{+} (t) = \m{y}_0^+ (t) = \m{x}(t) \; \mbox{ for all } t \in [0, s).
\end{equation}
For $t \in (s, T]$, it is shown in 
\cite[(2.12)--(2.14)]{AghaeeHager21} that
\begin{equation}\label{E-pert}
\|\m{y}^{+}(t) - \m{x}(t)\| = \C{O} (\Delta s) \mbox{ on }
(s, T], \mbox{ which implies }
\m{y}_E^+ (T) = \m{b}_E - \g{\pi}
\end{equation}
for some $\g{\pi} = \C{O}(\Delta s)$ since $\m{x}(T) = \m{b}_E$.
By (\ref{I-pert}) and (\ref{E-pert}),
$\m{y}^+$ lies inside the tubes around $\m{x}$
given in Dynamic Smoothness when $\Delta s$ is sufficiently small.
Moreover, as in (\ref{yx}), it follows from
Dynamics Smoothness and
\cite[Cor. 2.3]{AghaeeHager21} that (\ref{IVP+}) has a solution
$\m{y}_{\theta}^+$ when $|\Delta s| \le \rho$ and
$\|\g{\theta}\|$ is sufficiently small, and we have the bound
\begin{equation}\label{yx+}
\|\m{y}_{\theta}^+(t) - \m{y}^+(t)\| \le e^{Lt} \|\g{\theta} \|
\quad \mbox{for all } t \in [0, T].
\end{equation}
Combine (\ref{I-pert})--(\ref{yx+}), and the triangle inequality to obtain
\begin{equation}\label{ytheta+x}
\|\m{y}_{\theta}^+(t) - \m{x}(t)\| =
\C{O}(\Delta s) + \C{O}(\g{\theta}) \quad
\mbox{for all } t \in [0, T].
\end{equation}

Now let us consider whether a solution exists to (\ref{s+}),
assuming a solution to the original system (\ref{s}) exists when $\Delta s = 0$.
As in the previous section, our approach is to focus on the initial-value
problem (\ref{IVP+}) and try to choose $\g{\theta}$ such that
$\m{y}_{\theta}^+ = \m{x}^+$ is a solution of (\ref{s+}).
In particular, if we choose $\g{\theta}$ such that
\[
\left( \m{y}_{\theta}^+(T) - \m{y}_0^+ (T) \right)_E = \g{\pi},
\]
then combining this with (\ref{E-pert}) gives
\[
\m{y}_{\theta E}^+ = 
\m{y}_{E}^+ + \g{\pi} = \m{b}_E - \g{\pi} + \g{\pi} = \m{b}_E.
\]
Thus $\m{y}_\theta^+$ satisfies the same boundary conditions as
those for a solution $\m{x}^+$ of (\ref{s+}).
With this insight, the following result is established:
\begin{lemma}\label{s-stability}
If $\g{\Phi}_{EJ}(T)$ is invertible, then for $\Delta s$ in a neighborhood
of $0$, the problem $(\ref{s+})$, with perturbed switch point $s + \Delta s$,
has a solution $\m{x}^+$, and we have
\begin{equation}\label{xpi-stable+}
\|\m{x}_J^{+}(0) - \m{x}_J (0)\| =
\|\m{x}_J^{+}(0) - \g{\theta}^*\| \le c |\Delta s|
\mbox{ for all } \Delta s \mbox{ near } 0,
\end{equation}
where $c$ is a constant that is independent of $\Delta s$.
\end{lemma}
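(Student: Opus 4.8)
The plan is to follow the template of Lemma~\ref{IC-stability}, this time applying Proposition~\ref{asen} to the map $\m{g}(\g{\theta}) = \m{y}_{\theta E}^+(T) - \m{b}_E$, where $\m{y}_\theta^+$ is the solution of the perturbed initial-value problem (\ref{IVP+}). A zero $\g{\theta}$ of $\m{g}$ is precisely an initial condition $\g{\theta}^* + \g{\theta}$ for which $\m{y}_\theta^+$ also satisfies the terminal constraint $\m{x}_E(T) = \m{b}_E$, so that $\m{x}^+ := \m{y}_\theta^+$ solves (\ref{s+}); and since $\m{x}_J^+(0) = \g{\theta}^* + \g{\theta}$, the error bound (\ref{errorbound}) on $\|\g{\theta}\|$ is exactly the estimate (\ref{xpi-stable+}). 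By (\ref{E-pert}), $\m{g}(\m{0}) = \m{y}_E^+(T) - \m{b}_E = -\g{\pi}$ with $\|\g{\pi}\| = \C{O}(\Delta s)$, so the residual $\delta$ in Proposition~\ref{asen} satisfies $\delta = \C{O}(\Delta s)$ and can be made arbitrarily small by shrinking $|\Delta s|$.

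For the reference matrix we take $\C{L} = \g{\Phi}_{EJ}(T)$, the submatrix for the \emph{unperturbed} switch point, which is invertible by hypothesis with $\gamma = \|\g{\Phi}_{EJ}^{-1}(T)\|$. As in Lemma~\ref{IC-stability}, the analysis of \cite[Chap.~1.6]{TaylorPDE} applied to (\ref{IVP+}) — legitimate because $\m{F}^+$ is Lipschitz continuously differentiable in the state on the tubes of Dynamics Smoothness — shows that $\m{g}$ is continuously differentiable near $\g{\theta} = \m{0}$ and that $\nabla\m{g}(\g{\theta}) = \g{\Psi}_{EJ}(T)$, where $\g{\Psi}$ is the fundamental matrix of the linear system $\m{\dot{z}}(t) = \nabla_x\m{F}^+(\m{y}_\theta^+(t), t)\m{z}(t)$, $\g{\Psi}(0) = \m{I}$ (the Jacobian $\nabla_x \m{F}^+$ being defined off the single point $t = s + \Delta s$, which suffices for the a.e.\ theory). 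To verify the hypothesis (\ref{er}) we must show that $\|\g{\Psi}_{EJ}(T) - \g{\Phi}_{EJ}(T)\| \le \epsilon$ for all $\g{\theta}$ in a small ball, once both $|\Delta s|$ and the ball radius $r$ are small enough.

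This last estimate is the crux of the proof. It rests on the continuous dependence of a fundamental matrix on its coefficient matrix in the $L^1$ sense: by Gronwall's inequality, if $\m{A}, \m{B} : [0, T] \to \mathbb{R}^{n \times n}$ are integrable with $\|\m{A}(t)\|, \|\m{B}(t)\| \le \beta$ and fundamental matrices $\g{\Phi}_{\m{A}}, \g{\Phi}_{\m{B}}$ normalized to $\m{I}$ at $t = 0$, then $\|\g{\Phi}_{\m{A}}(T) - \g{\Phi}_{\m{B}}(T)\| \le e^{\beta T}\int_0^T \|\m{A}(t) - \m{B}(t)\|\,dt$. Apply this with $\m{A}(t) = \nabla_x\m{F}^+(\m{y}_\theta^+(t), t)$ and $\m{B}(t) = \nabla_x\m{F}(\m{x}(t), t)$, whose norms are bounded by $\beta$ by (\ref{beta}) and (\ref{ytheta+x}). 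The integrand splits into two contributions. For $t$ outside the interval bounded by $s$ and $s + \Delta s$, both dynamics use the same branch $\m{F}_j$, so the Lipschitz property of $\nabla_x\m{F}_j$ together with (\ref{ytheta+x}) gives $\|\m{A}(t) - \m{B}(t)\| \le L\|\m{y}_\theta^+(t) - \m{x}(t)\| = \C{O}(\Delta s) + \C{O}(\g{\theta})$; on the interval bounded by $s$ and $s + \Delta s$, of length $|\Delta s|$, the two dynamics use different branches, but both Jacobians are bounded by $\beta$, contributing at most $2\beta|\Delta s|$. Hence $\int_0^T \|\m{A}(t) - \m{B}(t)\|\,dt = \C{O}(\Delta s) + \C{O}(\g{\theta})$, and therefore $\|\g{\Psi}_{EJ}(T) - \g{\Phi}_{EJ}(T)\| = \C{O}(\Delta s) + \C{O}(\g{\theta})$, which is $\le \epsilon$ once $|\Delta s|$ and $r$ are small. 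The case $\Delta s < 0$ is handled identically, the only change being that on the interval between $s + \Delta s$ and $s$ the roles of $\m{F}_0$ and $\m{F}_1$ are interchanged.

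It remains to assemble the pieces, exactly as in Lemma~\ref{IC-stability}. Fix $\epsilon$ with $\epsilon\gamma < 1$; by the preceding paragraph there is an $r > 0$ and a threshold on $|\Delta s|$ so that (\ref{er}) holds. Shrinking $|\Delta s|$ further if necessary so that $\delta = \|\g{\pi}\| \le r(1 - \gamma\epsilon)/\gamma$, Proposition~\ref{asen} produces a unique $\g{\theta} \in \C{B}_r(\m{0})$ with $\m{g}(\g{\theta}) = \m{0}$ and $\|\g{\theta}\| \le \delta\gamma/(1 - \epsilon\gamma) = \C{O}(\Delta s)$. Then $\m{x}^+ := \m{y}_\theta^+$ solves (\ref{s+}) and $\|\m{x}_J^+(0) - \g{\theta}^*\| = \|\g{\theta}\| \le c|\Delta s|$ with $c = \gamma/(1 - \epsilon\gamma)$ independent of $\Delta s$, which is (\ref{xpi-stable+}). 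The main obstacle throughout is the uniform closeness of $\g{\Psi}_{EJ}(T)$ to $\g{\Phi}_{EJ}(T)$ — i.e., showing that moving the switch point perturbs the relevant submatrix of the fundamental matrix by only $\C{O}(\Delta s)$ despite the branch mismatch near $s$ — since everything else parallels the previous section.
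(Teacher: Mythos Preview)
Your proof is correct and follows the same overall strategy as the paper: apply Proposition~\ref{asen} to the shooting map $\m{g}(\g{\theta}) = \m{y}_{\theta E}^+(T) - \m{b}_E$, with the key step being to show that $\nabla\m{g}(\g{\theta})$ is uniformly close to an invertible reference matrix. The only substantive difference is your choice of $\C{L}$. The paper takes $\C{L} = \nabla\m{g}(\m{0}) = \g{\Phi}_{EJ}^+(T)$, the $EJ$ submatrix of the fundamental matrix for the \emph{perturbed} dynamics linearized about $\m{y}^+$; it then first shows $\g{\Phi}^+(T) - \g{\Phi}(T) = \C{O}(\Delta s)$ (so $\C{L}$ is invertible with $\|\C{L}^{-1}\|\le 2\gamma$) and separately shows $\nabla\m{g}(\g{\theta}) - \C{L} = \C{O}(\g{\theta})$, analyzing the three intervals $[0,s]$, $[s,s+\Delta s]$, $[s+\Delta s,T]$ one at a time. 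You instead fix $\C{L} = \g{\Phi}_{EJ}(T)$, the \emph{unperturbed} matrix, and bound $\nabla\m{g}(\g{\theta}) - \C{L}$ directly by a single $L^1$--Gronwall comparison of the two coefficient matrices, obtaining $\C{O}(\Delta s) + \C{O}(\g{\theta})$ in one stroke. Your packaging is a bit more economical and exploits that Proposition~\ref{asen} allows any invertible $\C{L}$, not just $\nabla\m{g}(\m{0})$; the paper's two-step split makes the dependence on $\Delta s$ versus $\g{\theta}$ more transparent. Either way the conclusion and the constant $c = \gamma/(1-\epsilon\gamma)$ (respectively $\gamma^+/(1-\epsilon\gamma^+)$) follow identically. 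One minor caveat: the constant in your $L^1$--Gronwall bound for fundamental matrices is $e^{2\beta T}$ rather than $e^{\beta T}$, but this is immaterial to the argument.
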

\begin{proof}
The lemma is stated in terms of the fundamental matrix $\g{\Phi}$
that arises in the unperturbed problem of Section~\ref{SBC}, and which
satisfies
\[
\g{\dot{\Phi}} (t) = \nabla_x F (\m{x}(t), t) \g{\Phi}(t), \quad
\g{\Phi}(0) = \m{I}.
\]
If the proof technique of Lemma~\ref{IC-stability} is applied to the problem
(\ref{s+}) with a perturbed switch point, then the associated fundamental
matrix is the solution of
\begin{equation}\label{Phi+}
\g{\dot{\Phi}}^+ (t) = \nabla_x F^+ (\m{y}^+(t), t) \g{\Phi}^+(t), \quad
\g{\Phi}^+(0) = \m{I}.
\end{equation}
Since $\m{x}(t) = \m{y}_0^+(t) = \m{y}^+(t)$ and $\m{F}^+ = \m{F}$
on the interval $[0, s]$, it follows that
$\g{\Phi}^+(t) = \g{\Phi} (t)$ on $[0, s]$.
On the interval $[s, s+\Delta s]$, $\g{\Phi}$ is associated with the dynamics
$\m{F}_1$ while $\g{\Phi}^+$ is associated with the dynamics  $\m{F}_0$,
so the fundamental matrices satisfy
\[
\g{\dot{\Phi}} (t) = \nabla_x \m{F}_1 (\m{x}(t), t) \g{\Phi}(t)
\quad \mbox{and} \quad
\g{\dot{\Phi}}^+ (t) = \nabla_x \m{F}_0 (\m{y}^+(t), t) \g{\Phi}^+(t) 
\quad \mbox{on } [s, s+\Delta s]
\]
with the initial condition $\g{\Phi} (s) = \g{\Phi}^+ (s)$.
Since $\m{F}_0$ and $\m{F}_1$ are smooth and the starting conditions for
$\g{\Phi} (t)$ and $\g{\Phi}^+ (t)$ at $t = s$ are the same,
it follows that the difference $\m{D} =\g{\Phi}^+ - \g{\Phi}$ satisfies
$\|\m{D}(s + \Delta s)\| = \C{O}(\Delta s)$.
On the interval $[s+\Delta s, T]$, the fundamental matrices satisfy
\[
\g{\dot{\Phi}} (t) = \nabla_x \m{F}_1 (\m{x}(t), t) \g{\Phi}(t)
\quad \mbox{and} \quad
\g{\dot{\Phi}}^+ (t) = \nabla_x \m{F}_1 (\m{y}^+(t), t) \g{\Phi}^+(t) .
\]
Subtracting the two equations, the difference $\m{D}$ satisfies
\begin{equation}\label{ats+}
\m{\dot{D}} (t) = \nabla_x \m{F}_1 (\m{y}^+(t), t) \m{D}(t)
+ [ \nabla_x \m{F}_1(\m{x}(t),t) - \nabla_x \m{F}_1(\m{y}^+(t),t)] \g{\Phi}(t),
\end{equation}
where $\m{D} (s + \Delta s) = \C{O}(\Delta s)$.
Choose $\Delta s$ small enough that $\m{y}^+$ lies within the tubes associated
with Dynamics Smoothness.
Hence, (\ref{E-pert}),
Dynamics Smoothness, and the Lipschitz property for $\nabla_x \m{F}_1$
imply that the coefficient of $\g{\Phi}$ in (\ref{ats+}) is $\C{O}(\Delta s)$.
By the boundedness of $\m{y}^+$ and $\g{\Phi}$, it follows that the
solution $\m{D}$ of the linear equation (\ref{ats+}) satisfies
$\m{D}(T) = \C{O}(\Delta s)$.
Since $\g{\Phi}_{EJ} (T)$ is invertible by assumption, then so is
$\g{\Phi}_{EJ}^+ (T)$ for $|\Delta s|$ sufficiently small and
$\g{\Phi}_{EJ}^+ (T)$ converges to $\g{\Phi}_{EJ} (T)$ as $\Delta s$ tends
to zero.
Let us take $\Delta s$ small enough that
$\|\g{\Phi}_{EJ}^{+}(T)^{-1}\| \le \gamma^+ := 2\gamma$.

Observe that the analysis of $\g{\Phi}$ and $\g{\Phi}^+$ concern the
case where $\g{\theta} = \m{0}$.
Next, $\g{\theta}$ is introduced into the analysis.
Similar to the approach in the proof of
Lemma~\ref{IC-stability}, we take
$\C{L} =$ $\nabla \m{g}(\m{0}) =$ $\g{\Phi}_{EJ}^+(T)$
where $\g{\Phi}^+$ is the solution of (\ref{Phi+}),
$\m{g}(\g{\theta}) =$
$\m{y}_{\theta E}^+(T) - \m{b}_E$, and $\m{y}^+_\theta$ is the
solution of (\ref{IVP+}).
Note that $\nabla \m{g}(\g{\theta})$ is the $EJ$ submatrix of
$\g{\Phi}_{\theta}^+ (T)$ where
\begin{equation}\label{Phi+theta}
\g{\dot{\Phi}}_{\theta}^+(t) =
\nabla_x \m{F}^+(\m{y}_{\theta}^+(t), t) \g{\Phi}_{\theta}^+ (t), \quad
\g{\Phi}^+ (0) = \m{I} .
\end{equation}
Subtract the equation (\ref{Phi+}) for $\g{\Phi}^+$
from (\ref{Phi+theta}) to obtain an equation for
the difference $\m{D}^+ = \g{\Phi}_{\theta}^+ - \g{\Phi}^+$:
\begin{equation}\label{D+DE}
\m{\dot{D}}^+(t) =
\nabla_x \m{F}^+(\m{y}_{\theta}^+ (t), t) \m{D}^+(t)
+ [ \nabla_x \m{F}^+ (\m{y}_{\theta}^+ (t), t) -
\nabla_x \m{F}^+ (\m{y}^+ (t), t)] \g{\Phi}^+(t) ,
\end{equation}
where $\m{D}^+ (0) = \m{0}$.
By the Lipschitz property for $\nabla_x \m{F}_0$ and $\nabla_x \m{F}_1$
and by (\ref{yx+}), the coefficient of $\g{\Phi}^+$ in
(\ref{D+DE}) is $\C{O}(\g{\theta})$ when $|\Delta s| \le \rho$ and
$\g{\theta}$ is sufficiently small.
Since $\m{y}^+_\theta$ and $\g{\Phi}^+$ are both uniformly bounded,
it follows from (\ref{D+DE}) that
$\|\m{D}^+(T)\| = \C{O}(\g{\theta})$.
In our context, the left side of (\ref{er}) is
\[
\|\nabla \m{g}(\g{\theta}) - \nabla \m{g}(\m{0})\| \le
\|\g{\Phi}_\theta^+ (T) - \g{\Phi}^+ (T)\| =
\|\m{D}^+(T)\| \le c\|\g{\theta}\|,
\]
for some constant $c$ independent of $\g{\theta}$ and $|\Delta s| \le \rho$.
Choose $\epsilon > 0$ such that $\epsilon \gamma^+ < 1$, and
choose $r$ small enough that $\|\m{D}^+(T)\| \le \epsilon$ when
$\|\g{\theta}\| \le r$.

By (\ref{E-pert}), $\delta =$ $\|\m{g}(\m{0})\| =$
$\|\m{y}_E^+(T) - \m{b}_E\| =$ $\C{O}(\Delta s)$.
Choose $\Delta s$ smaller, if necessary, to ensure that
$\delta \le r(1-\gamma^+ \epsilon)/\gamma^+$.
Hence, by Proposition~\ref{asen}, there exists a unique
$\g{\theta} \in \C{B}_r(\m{0})$ such that
$\m{g}(\g{\theta}) = \m{0}$, or equivalently, such that
$\m{y}_{\theta E}^+(T) = \m{b}_E$.
Moreover, $\m{x}^+ = \m{y}_{\theta}^+$ is a solution of the perturbed problem
(\ref{s+}) and $\|\g{\theta}\| \le c |\Delta s|$ where
$c = \gamma^+/(1-\epsilon \gamma^+)$ by (\ref{errorbound}).
The identity $\m{x}^+ = \m{y}_{\theta}^+$ implies that
\[
\m{x}^+_J (0) = \m{y}^+_{\theta J} (0) = \g{\theta}^* + \g{\theta},
\]
which rearranges to give (\ref{xpi-stable+}) since
$\g{\theta} = \C{O}(\Delta s)$.
\end{proof}

\section{Objective Derivative with Respect to Switch Point}
\label{DSP}
Lemmas~\ref{IC-stability} and \ref{s-stability} will be combined
to establish the formula (\ref{DC}) for the derivative of the objective with
respect to a switch point.
Notice that this formula involves the costate $\m{p}$, which must satisfy
complementary boundary conditions to those of $\m{x}$.
Since the costate equation is linear, its solution can be expressed in terms
of a fundamental matrix denoted $\g{\Psi}$, the unique solution of the
initial-value problem
\[
\g{\dot{\Psi}} = - \nabla_x \m{F}(\m{x}(t),t))\tr \g{\Psi}(t), \quad
\g{\Psi}(0) = \m{I}.
\]
Since $\m{p}_J (0) = \m{0}$ while $\m{p}_F (T) = \nabla_F C(\m{x}(T))$,
a solution to the costate equation exists when $\g{\Psi}_{FI}(T)$ is invertible.
\begin{theorem}\label{sp-theorem}
If Dynamics Smoothness holds, the objective $C$ is continuously
differentiable, and both $\g{\Phi}_{EJ}(T)$ and $\g{\Psi}_{FI}(T)$
are invertible, then
\begin{equation} \label{DCs}
\frac{\partial C}{\partial s} (s) =
H_{0} (\m{x}(s), \m{p}(s), s)
- H_{1}(\m{x}(s), \m{p}(s), s),
\end{equation}
where $H_j (\m{x}, \m{p}, t) = \m{p}\m{F}_j(\m{x}, t)$, $j = 0$ or $1$,
and the row vector $\m{p}: [0, T] \rightarrow \mathbb{R}^n$
is the solution to the linear differential equation
\begin{equation}\label{pd}
\m{\dot{p}}(t) = - \m{p}(t) \nabla_x \m{F}(\m{x}(t), t),
\quad t \in [0, T],
\quad
\m{p}_F(T) = \nabla_F C(\m{x}(T)), \quad \m{p}_J(0) = \m{0}.
\end{equation}
\end{theorem}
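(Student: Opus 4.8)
The plan is to reduce the theorem to a chain‑rule computation for the map $\Delta s \mapsto C(\m{x}^+(T))$ and then to evaluate the result with an adjoint identity. By Lemma~\ref{s-stability}, for $\Delta s$ near $0$ the perturbed problem (\ref{s+}) has a solution $\m{x}^+ = \m{y}^+_\theta$ with $\m{x}^+_J(0) = \g{\theta}^* + \g{\theta}$ and $\|\g{\theta}\| \le c|\Delta s|$. The first task is to show that $\m{x}^+$ is differentiable in $\Delta s$ at $\Delta s = 0$ and to identify the sensitivity $\m{v}(t) := \partial\m{x}^+(t)/\partial(\Delta s)\big|_{\Delta s = 0}$. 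I would write $\m{x}^+ - \m{x} = (\m{y}^+_\theta - \m{y}^+) + (\m{y}^+ - \m{x})$. For $\m{y}^+ - \m{x}$: it vanishes on $[0,s)$; over $[s,s+\Delta s]$, where $\m{y}^+$ obeys $\m{F}_0$ while $\m{x}$ obeys $\m{F}_1$, one gets $\m{y}^+(s+\Delta s) - \m{x}(s+\Delta s) = \Delta s\,[\m{F}_0(\m{x}(s),s) - \m{F}_1(\m{x}(s),s)] + o(\Delta s)$ by continuity of $\m{F}_0,\m{F}_1$ on the tubes; and on $[s+\Delta s,T]$, where both obey $\m{F}_1$, the difference propagates to first order through the fundamental matrix $\g{\Phi}$. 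For $\m{y}^+_\theta - \m{y}^+$: since $\g{\theta} = \C{O}(\Delta s)$ and $\g{\Phi}^+ \to \g{\Phi}$, an estimate of the type (\ref{deltabound}) gives $\m{y}^+_\theta(t) - \m{y}^+(t) = \g{\Phi}(t)\hat{\g{\theta}} + o(\Delta s)$, where $\hat{\g{\theta}}$ is $\g{\theta}$ with zeros appended on the indices $I$, and $\lim\hat{\g{\theta}}/\Delta s$ exists because $\g{\theta}$ solves a nonsingular system ($\g{\Phi}_{EJ}^+(T)$ invertible) with $\C{O}(\Delta s)$ data in the fixed‑point argument of Lemma~\ref{s-stability}. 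Adding the two pieces, $\m{v}$ exists, satisfies the variational equation $\m{\dot{v}} = \nabla_x\m{F}(\m{x}(t),t)\m{v}$ on $[0,s)$ and on $(s,T]$, has the jump $\m{v}(s^+) - \m{v}(s^-) = \m{F}_0(\m{x}(s),s) - \m{F}_1(\m{x}(s),s)$, and — because $\m{x}^+$ satisfies the fixed boundary conditions $\m{x}^+_I(0) = \m{b}_I$ and $\m{x}^+_E(T) = \m{b}_E$ for every $\Delta s$ — obeys $\m{v}_I(0) = \m{0}$ and $\m{v}_E(T) = \m{0}$.

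The chain rule then gives $\partial C/\partial s = \nabla C(\m{x}(T))\m{v}(T) = \nabla_F C(\m{x}(T))\m{v}_F(T)$, the last equality using $\m{v}_E(T) = \m{0}$. To recognize this as the Hamiltonian jump I would invoke the adjoint identity: on each of $[0,s)$ and $(s,T]$, $\frac{d}{dt}(\m{p}(t)\m{v}(t)) = \m{\dot{p}}\m{v} + \m{p}\m{\dot{v}} = -\m{p}\nabla_x\m{F}\,\m{v} + \m{p}\nabla_x\m{F}\,\m{v} = 0$, so $\m{p}\m{v}$ is constant on each subinterval; since $\m{p}$ is continuous, the only change across $[0,T]$ occurs at $s$ and equals $\m{p}(s)[\m{v}(s^+) - \m{v}(s^-)] = \m{p}(s)\m{F}_0(\m{x}(s),s) - \m{p}(s)\m{F}_1(\m{x}(s),s) = H_0(\m{x}(s),\m{p}(s),s) - H_1(\m{x}(s),\m{p}(s),s)$. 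Hence $\m{p}(T)\m{v}(T) - \m{p}(0)\m{v}(0)$ equals that jump. The complementary boundary conditions finish the argument: $\m{p}_J(0) = \m{0}$ with $\m{v}_I(0) = \m{0}$ give $\m{p}(0)\m{v}(0) = \m{0}$, while $\m{p}_F(T) = \nabla_F C(\m{x}(T))$ with $\m{v}_E(T) = \m{0}$ give $\m{p}(T)\m{v}(T) = \nabla_F C(\m{x}(T))\m{v}_F(T) = \partial C/\partial s$. Therefore $\partial C/\partial s\,(s) = H_0(\m{x}(s),\m{p}(s),s) - H_1(\m{x}(s),\m{p}(s),s)$, which is (\ref{DCs}). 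The hypothesis that $\g{\Psi}_{FI}(T)$ is invertible enters only to guarantee that $\m{p}$ is well defined (write $\m{p}(t) = \m{p}(0)\g{\Phi}(t)^{-1}$ and use $\g{\Psi}(t) = (\g{\Phi}(t)^{-1})\tr$; then the terminal condition on $\m{p}_F(T)$ determines $\m{p}_I(0)$ uniquely), and the invertibility of $\g{\Phi}_{EJ}(T)$ is precisely what Lemma~\ref{s-stability} requires for $\m{x}^+$, and hence $\m{v}$, to exist.

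The main obstacle is the first step: making rigorous the claim that $(\m{x}^+ - \m{x})/\Delta s$ converges to a limit $\m{v}$ with exactly this jump structure, with all error terms genuinely $o(\Delta s)$ and uniformly for both signs of $\Delta s$. This requires treating the three subintervals $[0,s)$, $[s,s+\Delta s]$, $[s+\Delta s,T]$ separately, using Gronwall's inequality and the second‑order estimate (\ref{deltabound}) to control the quadratic terms produced by the induced shift $\g{\theta}$ in the initial state, and showing that the contribution of the shrinking interval $[s,s+\Delta s]$, where the two dynamics disagree, yields in the limit precisely $\m{F}_0(\m{x}(s),s) - \m{F}_1(\m{x}(s),s)$; here the continuity of $\m{F}_0$ and $\m{F}_1$ on the tubes from Dynamics Smoothness, together with the bounds (\ref{E-pert}) and (\ref{yx+}) on how far $\m{y}^+$ and $\m{y}^+_\theta$ stray from $\m{x}$, are what is needed. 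Once the expansion of $\m{x}^+$ is established, the adjoint‑identity computation and the boundary‑condition bookkeeping are routine.
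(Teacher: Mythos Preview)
Your proposal is correct and follows the same strategy as the paper: linearize about $\m{x}$, identify the jump $\m{F}_0(\m{x}(s),s)-\m{F}_1(\m{x}(s),s)$ coming from the shrinking interval, and pair the linearization with the costate $\m{p}$ via the adjoint identity so that the complementary boundary conditions $(\m{p}_J(0)=\m{0},\,\m{v}_I(0)=\m{0})$ and $(\m{p}_F(T)=\nabla_F C,\,\m{v}_E(T)=\m{0})$ collapse everything to the Hamiltonian jump.

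The one organizational difference worth noting is that the paper never forms the limiting sensitivity $\m{v}$. Instead it keeps $\Delta s$ finite and introduces an explicit approximant $\m{Z}$ (your $\m{v}$ scaled, essentially) with initial data $\m{Z}_J(0)=\g{\theta}$ and the prescribed jump at $s+\Delta s$, proves $\|\m{x}^+-\m{x}-\m{Z}\|=\C{O}(|\Delta s|^2)$, integrates $\m{p}$ against $\m{Z}$ by parts, and only then divides by $\Delta s$ and lets $\Delta s\to 0$. The payoff is that the paper never needs to show that $\lim_{\Delta s\to 0}\g{\theta}/\Delta s$ exists: since $\m{x}^+_E(T)=\m{x}_E(T)=\m{b}_E$, the second-order bound gives $\m{Z}_E(T)=\C{O}(|\Delta s|^2)$ directly, which kills the $\m{p}_E(T)\m{Z}_E(T)$ term without ever resolving $\g{\theta}$ to first order. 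In your route that limit must be justified (an implicit-function-theorem step applied to $(\g{\theta},\Delta s)\mapsto \m{y}^+_{\theta,E}(T)-\m{b}_E$, using smooth dependence of ODE solutions on initial data and on the switching time); you sketch this correctly, but it is the place where your write-up would need the most added detail. Once $\m{v}$ is in hand, your adjoint computation $\frac{d}{dt}(\m{p}\m{v})=0$ is cleaner than the paper's two integrations by parts, so the trade-off is front-loaded analytic work for a tidier endgame.
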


\begin{proof}
By Lemma~\ref{s-stability}, the problem with perturbed switch point has a
solution $\m{x}^+$ for $\Delta s$ sufficiently small.
Our goal is to evaluate the limit
\[
\lim_{\Delta s \rightarrow 0} \frac{C(\m{x}^+(T)) - C(\m{x}(T))} {\Delta s} .
\]

Let $\m{y}_{\theta}^+$ be the solution of (\ref{IVP+}) associated with the
solution $\m{x}^+$ of (\ref{s+}); that is,
$\m{y}_{\theta}^+ = \m{x}^+$.
Let $\m{Z}$ be the solution to the following linearized system:
\begin{eqnarray}
\m{\dot{Z}}(t) &=& \nabla_x \m{F}_0 (\m{x}(t), t) \m{Z}(t), \quad
t \in [0, s), \quad \m{Z}_I(0) = \m{0}, \quad \m{Z}_J (0) = \g{\theta},
\label{eq0}\\
\m{\dot{Z}}(t) &=& \nabla_x \m{F}_1 (\m{x}(t), t) \m{Z}(t), \quad
t \in (s+\Delta s, T], \label{eq1}
\end{eqnarray}
where
\begin{equation}\label{Z-update}
\m{Z}(s+\Delta s) =
\m{Z}(s) + \Delta s [\m{F}_0 (\m{x}(s), s) - \m{F}_1 (\m{x}(s), s)].
\end{equation}
There is a unique solution to (\ref{eq0})--(\ref{Z-update}) due to the linearity
of the first two equations.
Since $\g{\theta} = \C{O}(\Delta s)$ by Lemma~\ref{s-stability}
and the coefficient of $\m{Z}$ in (\ref{eq0}) is continuous, it follows that
$\m{Z}(t) =$ $\C{O}(\Delta s)$ for $t \in [0, s]$.
Since $\m{F}_0 (\m{x}(s), s)$ and $\m{F}_1 (\m{x}(s), s)$ are both continuous
for $t \in [s, s+\rho]$, $\|\m{Z}(s + \Delta s)\| = \C{O}(\Delta s)$.
Finally, due to the linearity of (\ref{eq1}), we have
\begin{equation}\label{Zbound}
\m{Z}(t) = \C{O}(\Delta s) \quad \mbox{for }
t\in [0, s] \cup [s+\Delta s, T].
\end{equation}

The difference between $\m{y}_{\theta}^+ - \m{x}$ and $\m{Z}$ can be
analyzed as in Section~\ref{SBC} in terms of
$\g{\delta}(t) = \m{y}_{\theta}^+(t) - \m{x}(t) - \m{Z}(t)$.
By the initial conditions for $\m{y}_{\theta}^+$, for $\m{x} =$ $\m{y}_0$,
and for $\m{Z}$ in (\ref{IVP+}), (\ref{IVP}), and (\ref{eq0}) respectively,
it follows that $\g{\delta}(0) = \m{0}$.
Exactly the same expansions between (\ref{deltadot}) and (\ref{deltabound})
yield $\|\g{\delta}(t)\| =$ $\C{O}(\|\g{\theta}\|^2)$ for all $t \in [0, s]$.
Moreover, from Lemma~\ref{s-stability} and the fact that $\g{\theta}$ is chosen
such that $\m{y}_{\theta}^+ = \m{x}^+$, we have
$\|\g{\theta}\| \le c |\Delta s|$.
Hence,
\begin{equation}\label{[0, s]}
\|\g{\delta}(t)\| = \C{O}(|\Delta s|^2) \quad \mbox{on } [0, s].
\end{equation}

Now consider the interval $[s, s+\Delta s]$, $|\Delta s| \le \rho$.
Since $\m{x}^+$ and $\m{x}$ are twice continuously differentiable on
$(s, s+\Delta s)$, a Taylor expansion gives
\begin{eqnarray}
\m{x}^+(s + \Delta s) &=& \m{x}^+(s) + \Delta s \m{F}_0 (\m{x}^+(s), s)
+ \C{O}(|\Delta s|^2), \label{first} \\
\m{x}(s + \Delta s) &=& \m{x}(s) + \Delta s \m{F}_1 (\m{x}(s), s)
+ \C{O}(|\Delta s|^2). \label{second}
\end{eqnarray}
Subtracting (\ref{second}) and (\ref{Z-update}) from
the (\ref{first}) and referring to the definition of $\g{\delta}$ yields
\begin{equation} \label{deltas+ds}
\g{\delta}(s+\Delta s) = \g{\delta}(s) +
\Delta s [\m{F}_0 (\m{x}^+(s),s) - \m{F}_0(\m{x}(s),s)] + \C{O}(|\Delta s|^2) .
\end{equation}
By (\ref{ytheta+x}) and the fact established in Lemma~\ref{s-stability}
that $\m{y}_{\theta}^+ = \m{x}^+$ with
$\g{\theta} = \C{O}(\Delta s)$, we have
$\|\m{x}^+(s) - \m{x}(s)\| = \C{O}(\Delta s)$.
Due to Dynamics Smoothness and
the Lipschitz continuity of $\m{F}_0$, and the fact from (\ref{[0, s]})
that $\g{\delta}(s) = \C{O}(|\Delta s|^2)$,
(\ref{deltas+ds}) implies that
$\g{\delta}(s+\Delta s) = \C{O}(|\Delta s|^2)$.

The final interval $[s + \Delta s, T]$ is treated exactly as in the expansions
(\ref{deltadot})--(\ref{deltabound}) except that
$\g{\delta}(0) = \m{0}$ in (\ref{deltabound}) should be replaced by
$\g{\delta}(s + \Delta s) = \C{O}(|\Delta s|^2)$.
Nonetheless, we have
$\|\g{\delta}(t)\| =$ $\C{O}(|\Delta s|^2)$ for all
$t \in [s+\Delta s, T]$.
In summary,
\begin{equation}\label{deltaE}
\|\g{\delta}(t)\| = \C{O}(|\Delta s|^2) \quad \mbox{for all }
t \in [0, s] \cup [s+\Delta s, T].
\end{equation}

If $\m{p}$ is the solution of (\ref{pd}), which exists by the invertibility
assumption for $\g{\Psi}_{FI} (T)$, and $\m{Z}$ is the solution
of (\ref{eq0})--(\ref{Z-update}), then we integrate over
$[s+\Delta s, T]$ and then integrate by parts to obtain
\begin{eqnarray}
0 &=& \int_{s+\Delta s}^T \m{p}(t)
\bigg[\nabla_x \m{F}(\m{x}(t), t)\m{Z}(t) - \m{\dot{Z}}(t)\bigg] \; dt
\nonumber \\
&=& \int_{s+\Delta s}^T \bigg[ \m{p}(t)
\nabla_x \m{F}(\m{x}(t), t) + \m{\dot{\m{p}}}(t)\bigg] \m{Z}(t) \; dt
- \m{p}(T)\m{Z}(T) +\m{p}(s+\Delta s) \m{Z}(s+\Delta s)
\nonumber  \\[.05in]
&=& -\m{p}_E (T)\m{Z}_E (T) -\m{p}_F (T)\m{Z}_F (T)
+\m{p}(s+\Delta s) \m{Z}(s+\Delta s)
\nonumber  \\[.05in]
&=& -\m{p}_E (T)\m{Z}_E (T) - \nabla_F C(\m{x}(T))\m{Z}_F(T)
\nonumber \\[.05in]
&& \quad + \m{p}(s+\Delta s) [\m{Z}(s) + \Delta s(\m{F}_0(\m{x}(s), s)
- \m{F}_1(\m{x}(s), s))] , \label{[s,T]}
\end{eqnarray}
where the integral in the second equality vanishes due to (\ref{pd}) and
the last equality is due to (\ref{Z-update}).
Similarly, an integral over $[0, s]$ yields
\begin{eqnarray}
0 &=& \int_{0}^s \m{p}(t)
\bigg[\nabla_x \m{F}(\m{x}(t), t)\m{Z}(t) - \m{\dot{Z}}(t)\bigg] \; dt
\nonumber \\
&=& \int_{0}^s \bigg[ \m{p}(t)
\nabla_x \m{F}(\m{x}(t), t) + \m{\dot{\m{p}}}(t)\bigg] \m{Z}(t) \; dt
- \m{p}(s)\m{Z}(s) +\m{p}(0) \m{Z}(0)
\nonumber  \\[.05in]
&=&  -\m{p}(s)\m{Z}(s)
\label{[0,s]}
\end{eqnarray}
since $\m{p}_J(0) = \m{0} = \m{Z}_I(0)$.

Since $C$ is continuously differentiable at
$\m{x}(T)$, the mean-value theorem gives
\begin{equation}\label{deltaC}
C(\m{x}^+(T)) - C(\m{x}(T)) =
\nabla_F C(\m{x}_{\Delta})[\m{x}_F^+ (T) - \m{x}_F (T)],
\end{equation}
where $\m{x}_\Delta$ is a point on the line segment connecting
$\m{x}^+(T)$ and $\m{x}(T)$.
Add (\ref{[s,T]})--(\ref{deltaC}) and substitute
\[
\m{x}^+(T) - \m{x}(T) =
\m{x}^+(T) - \m{x}(T) -\m{Z}(T) + \m{Z}(T) = \g{\delta}(T) + \m{Z}(T)
\]
to obtain
$C(\m{x}^+(T)) - C(\m{x}(T)) =$
\begin{eqnarray}
&\nabla_F C(\m{x}_{\Delta})\g{\delta}_F(T) +
[\nabla_F C(\m{x}_{\Delta})- \nabla_F C(\m{x}(T)]\m{Z}_F (T)
+ [\m{p}(s+\Delta s) - \m{p}(s)]\m{Z}(s) & \nonumber \\[.1in]
& -\m{p}_E(T) \m{Z}_E(T) + \Delta s \m{p}(s+\Delta s)
[\m{F}_0(\m{x}(s), s) - \m{F}_1(\m{x}(s), s)]. &
\label{final}
\end{eqnarray}

Bounds are now obtained for each of the terms in (\ref{final}).
By (\ref{deltaE}), $\|\g{\delta}(T)\| = \C{O}(|\Delta s|^2)$ so
$|\nabla_F C(\m{x}_{\Delta})\g{\delta}_F(T)| = \C{O}(|\Delta s|^2)$.
Since the distance between $\m{x} (t)$ and $\m{x}^+(t) = \m{y}_\theta^+ (t)$
is $\C{O}(\Delta s)$ by (\ref{ytheta+x}) and Lemma~\ref{s-stability},
the distance between $\m{x}_\Delta$ and $\m{x}(T)$ is also $\C{O}(\Delta s)$.
Since $\m{Z}(T) = \C{O}(\Delta s)$ by (\ref{Zbound}),
it follows that $\m{Z}_F (T) = \C{O}(\Delta s)$,
while the coefficient of $\m{Z}_F$ tends to zero as $\Delta s$ tends to zero.
Similarly, $\m{Z}(s) = \C{O}(\Delta s )$ by (\ref{Zbound}),
and the coefficient of $\m{Z}(s)$ tends to $0$ as $|\Delta s|$ tends to $0$.
Finally,
since $\m{x}_E^+(T) = \m{x}_E(T) = \m{b}_E$ and
$\g{\delta} (T) = \C{O}(|\Delta s|^2)$, it follows that
\[
\C{O}(|\Delta s|^2) = \|\g{\delta}_E (T)\| =
\|\m{x}_E^+(T) - \m{x}_E(T) - \m{Z}_E(T)\| = \|\m{Z}_E(T)\|,
\]
which implies that $\m{p}_E(T) \m{Z}_E(T) =\C{O}(|\Delta s|^2)$.
Divide (\ref{final}) by $\Delta s$ and let $\Delta s$ tend
to zero to obtain
\[
\frac{\partial C}{\partial s} (s) =
\lim_{\Delta s \rightarrow 0}
\frac{C(\m{x}^+(T)) - C(\m{x}(T))}{\Delta s}
=  \m{p}(s) [\m{F}_0(\m{x}(s), s) - \m{F}_1(\m{x}(s), s)] ,
\]
which completes the proof.
\end{proof}

\section{Singular Control Depending on Both State and Costate}
\label{case2}
The case where a singular control depends on both the state and costate was
analyzed in \cite[Sect.~3]{AghaeeHager21}.
The basic idea is to view the state/costate pair $(\m{x}, \m{p})$ as a new
generalized state variable that must satisfy the endpoint conditions
appearing in the first-order optimality conditions.
Next, a pair of generalized co-states are introduced corresponding to the
state and costate dynamics, which leads to a generalized Hamiltonian.
The formula for the derivative of the objective with respect to a switch point
is the same as the formula in the original formulation, however,
the Hamiltonian is replaced by the generalized Hamiltonian.
The reader is referred to \cite[Sect.~3]{AghaeeHager21} for details.

\section{Algorithms}
\label{numerical}
The derivative obtained in this paper is very useful when solving a
singular control problem using gradient-based methods;
however, a good starting guess for the switching points is needed.
One useful approach for generating an initial guess is to employ an Euler
discretization with total variation regularization, as explained in
\cite[Sect.~5]{AghaeeHager21} and with more detail in
\cite{Atkins23}.

When a problem has multiple switch points, the derivative with respect to all
the switch points can be computed with one integration of the state dynamics,
followed by one integration of the costate dynamics.
Since the derivative with respect a switch point is related to the
Hamiltonian change at the switch point, both the dynamics and the costate should
be evaluated accurately at the switch points.

When evaluating the objective or its gradient, one must also find the state
that satisfies the boundary conditions.
Similar to the analysis in the paper, the state that satisfies the boundary
conditions can be computed by choosing $\g{\theta}$ so that
$\m{y}_\theta = \m{x}$ satisfies the boundary conditions.
Newton's method is often a good approach for computing $\g{\theta}$.

\section{Conclusions}
\label{conclusions}
The Switch Point Algorithm of \cite{AghaeeHager21} for an initial-value problem
was extended to handle both initial and terminal boundary conditions.
The formula for the derivative of the objective with respect to a switch
point reduced to the change in the Hamiltonian across a switch point.
This was the same formula obtained for an initial-value problem.
Nonetheless, significant modifications in the analysis were needed to handle
terminal constraints.
In particular, the existence and stability of solutions to a boundary-value
problem under perturbations in the terminal constraint and in the
switch points needed to be analyzed, and the
invertibility of certain matrices connected with the
linearized state equation and with the costate equation were required.

\section{Acknowledgments}
Many thanks to Christian Austin for pointing out Taylor's book
\cite{TaylorPDE} which provides in Chapter~1.6 a compact treatment
of differentiability for the solution of a differential equation
with respect to an initial condition.
\bigskip

\noindent
{\bf Data Availability}
Data sharing is not applicable to this article as no datasets were
generated or analyzed during the current study.
\smallskip

\noindent
{\bf Conflict of Interest}
The author has no competing interests to declare that are relevant
to the content of this article.
\bibliographystyle{siam}

\begin{thebibliography}{10}

\bibitem{AghaeeHager21}
{\sc M.~Aghaee and W.~W. Hager}, {\em The switch point algorithm}, SIAM
  J.~Control~Optim., 59 (2021), pp.~2570--2593.

\bibitem{Aly1978}
{\sc G.~Aly}, {\em The computation of optimal singular control}, International
  Journal of Control, 28 (1978), pp.~681--688.

\bibitem{Anderson1972}
{\sc G.~M. Anderson}, {\em An indirect numerical method for the solution of a
  class of optimal control problems with singular arcs}, IEEE Trans. Automat.
  Control,  (1972), pp.~363--365.

\bibitem{Biegler20}
{\sc O.~Andr{\'e}s-Mart{\'i}nez, L.~T. Biegler, and A.~Flores-Tlacuahuac}, {\em
  An indirect approach for singular optimal control problems}, Computers and
  Chemical Engineering, 139 (2020), pp.~106923: 1--12.

\bibitem{Biegler19b}
{\sc O.~Andr{\'e}s-Mart{\'i}nez, A.~Flores-Tlacuahuac, S.~Kameswaran, and L.~T.
  Biegler}, {\em An efficient direct/indirect transcription approach for
  singular optimal control}, American Institute of Chemical Engineers Journal,
  65 (2019), pp.~937--946.

\bibitem{Atkins21}
{\sc S.~Atkins}, {\em Regularization of Singular Control Problems that Arise in
  Mathematical Biology}, PhD thesis, Department of Mathematics, University of
  Florida, Gainesville, FL, 2021.

\bibitem{Atkins23}
{\sc S.~Atkins, M.~Aghaee, M.~Martcheva, and W.~Hager}, {\em Solving singular
  control problems in mathematical biology using {PASA}}, in Computational and
  Mathematical Population Dynamics, N.~Tuncer, M.~Martcheva, O.~Prosper, and
  L.~Childs, eds., World Scientific, 2023, ch.~9, pp.~319--419.

\bibitem{Betts2010}
{\sc J.~T. Betts}, {\em Practical Methods for Optimal Control Using Nonlinear
  Programming, second edition}, SIAM, Philadelphia, 2010.

\bibitem{Bryson75}
{\sc A.~E. Bryson and Y.-C. Ho}, {\em Applied Optimal Control}, Hemisphere
  Publishing, New York, 1975.

\bibitem{Biegler16}
{\sc W.~Chen and L.~T. Biegler}, {\em Nested direct transcription optimization
  for singular optimal control problems}, American Institute of Chemical
  Engineers Journal, 62 (2016), pp.~3611--3627.

\bibitem{Biegler19a}
{\sc W.~Chen, Y.~Ren, G.~Zhang, and L.~T. Biegler}, {\em A simultaneous
  approach for singular optimal control based on partial moving grid}, American
  Institute of Chemical Engineers Journal, 65 (2019), pp.~e16584: 1--10.

\bibitem{DarbyHagerRao11}
{\sc C.~L. Darby, W.~W. Hager, and A.~V. Rao}, {\em Direct trajectory
  optimization using a variable low-order adaptive pseudospectral method},
  Journal of Spacecraft and Rockets, 48 (2011), pp.~433--445.

\bibitem{DarbyHagerRao10}
\leavevmode\vrule height 2pt depth -1.6pt width 23pt, {\em An hp-adaptive
  pseudospectral method for solving optimal control problems}, Optim. Control
  Appl. Methods, 32 (2011), pp.~476--502.

\bibitem{HagerDontchevPooreYang95}
{\sc A.~Dontchev, W.~W. Hager, A.~Poore, and B.~Yang}, {\em Optimality,
  stability, and convergence in nonlinear control}, Applied Math. and Optim.,
  31 (1995), pp.~297--326.

\bibitem{DontchevHagerVeliov00}
{\sc A.~L. Dontchev, W.~W. Hager, and V.~M. Veliov}, {\em Second-order
  {Runge}-{Kutta} approximations in constrained optimal control}, {SIAM} J.
  Numer. Anal., 38 (2000), pp.~202--226.

\bibitem{Hager90}
{\sc W.~W. Hager}, {\em Multiplier methods for nonlinear optimal control},
  {SIAM} J. Numer. Anal., 27 (1990), pp.~1061--1080.

\bibitem{Hager2000RungeKuttaMI}
{\sc W.~W. Hager}, {\em Runge-kutta methods in optimal control and the
  transformed adjoint system}, Numerische Mathematik, 87 (2000), pp.~247--282.

\bibitem{Hager02b}
{\sc W.~W. Hager}, {\em Numerical analysis in optimal control}, in
  International Series of Numerical Mathematics, K.-H. Hoffmann, I.~Lasiecka,
  G.~Leugering, J.~Sprekels, and F.~Tr\"{o}ltzsch, eds., vol.~139,
  Basel/Switzerland, 2001, Birkhauser Verlag, pp.~83--93.

\bibitem{HagerHouRao19}
{\sc W.~W. Hager, H.~Hou, S.~Mohapatra, A.~V. Rao, and X.-S. Wang}, {\em
  Convergence rate for a {Radau} {\it hp}-collocation method applied to
  constrained optimal control}, Comput. Optim. Appl., 74 (2019), pp.~274--314.

\bibitem{bang_bang}
{\sc W.~W. Hager and R.~Rostamian}, {\em Optimal coatings, bang-bang controls,
  and gradient techniques}, Optim. Control Appl. Methods, 8 (1987), pp.~1--20.

\bibitem{Jacobson70}
{\sc D.~H. Jacobon, S.~B. Gershwin, and M.~M. Lele}, {\em Computation of
  optimal singular controls}, IEEE Trans. Automat. Control, 15 (1970),
  pp.~67--73.

\bibitem{LiuHagerRao15}
{\sc F.~Liu, W.~W. Hager, and A.~V. Rao}, {\em Adaptive mesh refinement method
  for optimal control using nonsmoothness detection and mesh size reduction},
  J. Franklin Inst., 352 (2015), pp.~4081--4106.

\bibitem{LiuHagerRao18}
\leavevmode\vrule height 2pt depth -1.6pt width 23pt, {\em Adaptive mesh
  refinement method for optimal control using decay rates of {Legendre}
  polynomial coefficients}, IEEE Trans. Control Sys. Tech., 26 (2018),
  pp.~1475--1483.

\bibitem{Maurer1976}
{\sc H.~Maurer}, {\em Numerical solution of singular control problems using
  multiple shooting techniques}, J. Optim. Theory Appl., 18 (1976),
  pp.~235--257.

\bibitem{Maurer05}
{\sc H.~Maurer, C.~B\"uskens, J.-H.~R. Kim, and C.~Y. Kaya}, {\em Optimization
  methods for the verification of second order sufficient conditions for
  bang--bang controls}, Optim. Control Appl. Methods, 26 (2005), pp.~129--156.

\bibitem{Neubert03}
{\sc M.~Neubert}, {\em Marine reserves and optimal harvesting}, Ecology
  Letters, 6 (2003), pp.~843--849.

\bibitem{PagerRao2022}
{\sc E.~R. Pager and A.~V. Rao}, {\em Method for solving bang-bang and singular
  optimal control problems using adaptive {Radau} collocation}, Comput. Optim.
  Appl., 81 (2022), pp.~857--887.

\bibitem{PattersonHagerRao15}
{\sc M.~A. Patterson, W.~W. Hager, and A.~V. Rao}, {\em A $ph$ mesh refinement
  method for optimal control}, Optimal Control Applications and Methods, 36
  (2015), pp.~398--421.

\bibitem{Schattler12}
{\sc H.~Sch\"attler and U.~Ledzewicz}, {\em Geometric Optimal Control},
  Springer, New York, 2012.

\bibitem{TaylorPDE}
{\sc M.~E. Taylor}, {\em Partial Differential Equations I. Basic Theory},
  Springer, New York, 2011.

\bibitem{VossenThesis}
{\sc G.~Vossen}, {\em Numerische L\"osungsmethoden, hinreichende
  Optimalit\"atsbedingungen und Sensitivit\"at-sanalyse f\"ur optimale
  bang-bang und singul\"are Steuerungen}, PhD thesis, Universit\"at M\"unster,
  Germany, 2006.

\bibitem{Vossen2010}
{\sc G.~Vossen}, {\em Switching time optimization for bang-bang and singular
  controls}, J. Optim. Theory Appl., 144 (2010), pp.~409--429.

\end{thebibliography}

\end{document}